\documentclass[11pt]{article}
\usepackage[utf8]{inputenc}
\usepackage{amsmath,amssymb,amsthm,mathrsfs,authblk}
\usepackage{geometry}
\usepackage{esint}
\usepackage{hyperref}
\usepackage{enumitem}
\usepackage{mathrsfs}

\geometry{margin=1in}
\setlength{\parskip}{0.5em}
\setlength{\parindent}{0pt}

\def\XXint#1#2#3{{\setbox0=\hbox{$#1{#2#3}{\int}$}
     \vcenter{\hbox{$#2#3$}}\kern-.5\wd0}}

\newtheorem{theorem}{Theorem}[section]
\newtheorem{lemma}[theorem]{Lemma}
\newtheorem{assumption}{Assumption}

\newtheorem{proposition}[theorem]{Proposition}
\theoremstyle{definition}
\newtheorem{definition}[theorem]{Definition}

\theoremstyle{remark}
\newtheorem*{remark}{Remark}

\title{LIMIT THEOREMS FOR MARKOV WALKS CONDITIONED TO STAY POSITIVE IN THE $\alpha$-STABLE REGIME UNDER A SPECTRAL GAP ASSUMPTION}
\author[1]{Yunfan Zhao\thanks{Email: \texttt{yfzucla21@ucla.edu}. Both authors contributed equally to this work.}}
\author[1]{Xiaojing Chen\thanks{Email: \texttt{chenxj@suibe.edu.cn}}}
\affil[1]{Shanghai University of International Business and Economics}

\date{}

\begin{document}
\maketitle

\begin{abstract}
Let $(X_n)_{n\ge 1}$ be a Markov chain on a measurable state space $X$, and let 
$S_n = \sum_{k=1}^n f(X_k)$ be the associated Markov walk. For $y>0$, denote by 
$\tau_y$ the first time at which $y+S_n$ becomes non-positive. Assuming that the 
centred martingale approximation of $S_n$ lies in the domain of attraction of a 
strictly $\alpha$-stable law with $\alpha\in(1,2)$, and that the transition operator 
satisfies a spectral-gap condition, we determine the asymptotic behaviour of 
$P_x(\tau_y>n)$.

In particular, we show the existence of a strictly positive $Q^+$-harmonic function 
$V_\alpha(x,y)$ such that
$$n^{1-\rho} L(n)\, P_x(\tau_y>n) \longrightarrow V_\alpha(x,y),$$
where $L$ is slowly varying and $\rho$ is the positivity parameter of the limiting 
$\alpha$-stable process. We further establish the asymptotic growth of $V_\alpha(x,y)$
as $y\to\infty$ and prove a conditional limit theorem: conditionally on 
$\{\tau_y>n\}$,

$$\frac{S_n}{n^{1/\alpha} L(n)}$$
converges in distribution to the $\alpha$-stable meander.

These results extend the Gaussian spectral-gap theory of Markov walks to the full 
stable regime and give the first appearance of stable meanders for Markov additive 
processes under such assumptions.
\end{abstract}

\section{Introduction}
A principal direction of research on conditioned random walks concerns the
asymptotic behaviour of trajectories constrained to remain in a prescribed
domain, typically the half-line or a cone. The classical works of
Iglehart~\cite{Iglehart1974} and Bolthausen~\cite{Bolthausen1976} provide the
functional central limit theorem for random walks conditioned to stay positive.
If $(S_n)$ is a centred walk with finite variance $\sigma^2$, then under the
conditioning $\{S_1>0,\dots,S_n>0\}$ one has
$$
\bigl(S_{\lfloor nt\rfloor} \,\big|\, S_k>0,\ k\le n\bigr)
\Longrightarrow \sigma B^{\mathrm{me}}(t),
$$
where $B^{\mathrm{me}}$ denotes the Brownian meander.

The renewal-theoretic representation developed by
Bertoin and Doney~\cite{BertoinDoney1994} identifies the conditioned law in
terms of harmonic functions associated with the ascending ladder process.
Caravenna~\cite{Caravenna2005} obtained a precise local limit theorem,
establishing the uniform asymptotic behaviour of the conditional probabilities
$$
\mathbb{P}(S_n=x \mid S_k>0,\ k\le n),
$$
for $x$ of order $\sqrt{n}$.

In the context of integrated or iterated sums, the persistence exponents differ
substantially from those of classical random walks. Dembo, Ding and
Gao~\cite{DemboDingGao2013} proved that if $A_n=\sum_{k=1}^n S_k$, then
$$
\mathbb{P}(A_k>0\ \text{for all}\ k\le n)\asymp n^{-1/4},
$$
exhibiting the effect of strong correlations. Precise exit-time asymptotics for
integrated walks were obtained by Denisov and Wachtel~\cite{DenisovWachtel2015a},
who also developed, in~\cite{DenisovWachtel2015b}, a general theory of random
walks in cones based on the existence of suitable harmonic functions. In
particular, the survival probability satisfies
$$
\mathbb{P}(\tau_{\mathcal C}>n)\approx n^{-\alpha},
$$
where the exponent $\alpha$ depends on geometric characteristics of the cone.

Ordered random walks, studied by Eichelsbacher and
König~\cite{EichelsbacherKonig2008} and by Denisov and
Wachtel~\cite{DenisovWachtel2010}, provide a higher-dimensional analogue, in
which the Vandermonde determinant appears naturally as the harmonic function
governing the conditioning.

More recently, Grama, Lauvergnat and Le Page~\cite{GramaLauvergnatLePage2018}
developed a general framework for Markov walks conditioned to remain positive
under a spectral gap assumption. Their approach yields the asymptotics of the
exit probability
$$
\mathbb{P}_x(\tau_y>n)\sim \frac{2 V(x,y)}{\sqrt{2\pi n}\,\sigma},
$$
and the convergence of the conditioned law of $(y+S_n)/(\sigma\sqrt n)$ to the
Rayleigh distribution. The analysis relies essentially on constructing a
non-negative $Q^+$-harmonic function $V$, whose domain of positivity determines
the admissible initial conditions. 

\subsection{Main Contributions}
Assume that on a probability space $(\Omega,\mathcal{F},\mathbb{P})$ we are given a 
Markov chain $(X_n)_{n\ge1}$ with values in a measurable state space $\mathbb{X}$.
Let $f$ be a real function on $\mathbb{X}$ and set
\begin{equation}
S_0=0,\qquad S_n=f(X_1)+\cdots+f(X_n),\quad n\ge1.
\end{equation}
For a starting point $y>0$, denote by
\begin{equation}
\tau_y:=\inf\{k\ge1:\ y+S_k\le0\}
\end{equation}
the first moment at which the Markov walk becomes non-positive.
Our purpose is to determine the asymptotic behavior of the survival probability
$\mathbb{P}_x(\tau_y>n)$ and the conditional distribution of the rescaled walk $S_n$
given the event $\{\tau_y>n\}$.

The classical i.i.d.\ case on $\mathbb{R}$, going back to \cite{Iglehart1974} \cite{Bolthausen1976} \cite{Caravenna2005} \cite{Caravenna2005}, is by now well understood when the
increments lie in the domain of attraction of a stable law. In recent years 
considerable progress has been made in higher dimensions and for various dependent
structures. However, for Markov chains the situation is much less complete.
The Gaussian (finite-variance) case under a spectral-gap assumption was established 
by \cite{GramaLauvergnatLePage2018}, where precise asymptotics and 
Brownian-meander limits were obtained. To the best of our knowledge, no analogue of
these results has been available in the heavy-tailed, $\alpha$-stable regime.

In the present work we fill this gap. We assume that the centered martingale 
approximation of the Markov walk belongs to the domain of attraction of a strictly 
$\alpha$-stable law with $\alpha\in(1,2)$, and we investigate the associated 
conditioned process. Under a spectral gap for the transition operator on an 
appropriate Banach space, together with a perturbation framework for Fourier 
operators, we establish sharp asymptotics for the exit probability 
$\mathbb{P}_x(\tau_y>n)$ and identify the limiting law of the conditioned walk.
Our approach extends the operator-theoretic method developed in the Gaussian case 
to the full stable setting.

To present the main results, let $L(\cdot)$ denote the slowly varying function 
appearing in the stable normalization $n^{1/\alpha}L(n)$, and let 
$\rho\in(0,1)$ be the positivity parameter of the associated $\alpha$-stable Lévy 
process. We show that there exists a strictly positive harmonic function 
$V_\alpha(x,y)$ on $\mathbb{X}\times(0,\infty)$ such that for every 
$x\in\mathbb{X}$ and $y>0$,
\begin{equation}
n^{1-\rho}L(n)\,\mathbb{P}_x(\tau_y>n)\longrightarrow V_\alpha(x,y),
\end{equation}
and that the limiting function satisfies the harmonic relation
\begin{equation}
V_\alpha(x,y)
=\mathbb{E}_x\!\left[\,V_\alpha(X_1,y+S_1);\ \tau_y>1\,\right].
\end{equation}
Moreover, we determine its asymptotic growth as $y\to\infty$,
\begin{equation}
V_\alpha(x,y)\sim h(x)\,y^{\alpha(1-\rho)},
\end{equation}
for a positive function $h$. Our second main theorem identifies the conditioned
limit: conditionally on $\tau_y>n$, the rescaled walk converges in distribution to 
the time-$1$ $\alpha$-stable meander. This provides, for the first time, the 
appearance of stable meanders for Markov additive processes under a spectral-gap 
assumption, extending the Brownian-meander result of the Gaussian case.

The rest of the paper is organized as follows. Section 2 introduces the Markov walk

\begin{equation}
S_n=\sum_{k=1}^n f(X_k)
\end{equation}
and the exit time

\begin{equation}
\tau_y=\inf\{k\ge1: y+S_k\le0\}
\end{equation}
The transition operators needed later are defined, and the hypotheses on the Banach space, the spectral gap, the perturbed operators, and the integrability conditions are stated. These assumptions form the analytic framework used throughout the paper.

Section 3 collects the technical bounds. Using the Poisson equation, the decomposition

\begin{equation}
S_n = M_n + r(X_0)-r(X_n)
\end{equation}
is obtained, where $(M_n)$ is a martingale. Uniform estimates for the remainder and for the martingale increments are proved. These estimates allow the replacement of the walk by its martingale approximation in the subsequent arguments.

Section 4 develops the approximation to a strictly $\alpha$-stable L\'evy process. A functional limit theorem is obtained for the martingale part, and a coupling gives

\begin{equation}
|S_n - Z_\alpha(n)| = o(n^{1/\alpha}) \quad \text{a.s.}
\end{equation}
The comparison between the exit event $\{\tau_y>n\}$ and its martingale analogue is derived. The boundary perturbation coming from the remainder does not modify the tail index, so the survival probability inherits the exponent $1-\rho$, where $\rho=P(Z_\alpha(1)>0)$.

Section 5 contains the main asymptotic theorems. It is shown that

\begin{equation}
n^{1-\rho}L(n)\,P_x(\tau_y>n)\to V_\alpha(x,y),
\end{equation}
where $V_\alpha$ is a positive $Q^+$-harmonic function, and its growth as $y\to\infty$ is identified. The conditioned limit theorem is then proved: under the event $\{\tau_y>n\}$,
$$\frac{S_n}{n^{1/\alpha}L(n)}$$
converges in distribution to the $\alpha$-stable meander. These results extend the Gaussian case to the stable setting under the spectral gap assumptions of Section 2.

\section{Preliminaries}
In this section we introduce the Markov walk, the exit time $\tau_{y}$, and the analytic framework used throughout the paper. We recall the assumptions on the Banach space, the spectral gap, and the perturbed operators, and we formulate the conditions under which the subsequent results are established.

\begin{definition}
Let $(X_n)_{n\ge 0}$ be a Markov chain on a measurable space $(\mathbb{X},\mathscr{X})$ with transition probability $P(x,dy)$.  
Let $f:\mathbb{X}\to\mathbb{R}$ be measurable and define the Markov random walk by $S_0=0$ and
$$
S_n=\sum_{k=1}^n f(X_k).
$$
For $y\in\mathbb{R}$, define the exit time

\begin{equation}
\tau_y=\inf\{k\ge 1: y+S_k\le 0\}.
\end{equation}
Let $\mathcal{B}$ be a Banach space of functions on $\mathbb{X}$ containing the constant function $e$.  
Define the transition operator
$$
Pg(x)=\mathbb{E}_x[g(X_1)],
$$
and the restricted operator on the domain of positivity
\begin{equation}
Q_+\varphi(x,y)=\mathbb{E}_x[\varphi(X_1,y+S_1);\ \tau_y>1].
\end{equation}

\end{definition}

\begin{assumption}[Banach space]\mbox{}\\
\begin{enumerate}
    \item The unit function $e$ belongs to $\mathscr{B}$.
    \item For any $x\in\mathbb{X}$, the Dirac measure $\boldsymbol{\delta}_x$ belongs to $\mathscr{B}'$.
    \item The Banach space $\mathscr{B}$ is included in $L^1(\mathbf{P}(x,\cdot))$ for any $x\in\mathbb{X}$.
    \item There exists $\varepsilon_0\in(0,1)$ such that, for any $g\in\mathscr{B}$ and any $t$ with $|t|\le \varepsilon_0,$ the function $\mathrm{e}^{itf}g$ is in $\mathscr{B}$.
\end{enumerate}
\end{assumption}

\begin{assumption}[Spectral gap]\mbox{}\\
\begin{enumerate}
    \item The map $g\mapsto \mathbf{P}g$ is a bounded operator on $\mathscr{B}$.
    \item There exist constants $C_Q>0$ and $\kappa\in(0,1)$ such that
    $\mathbf{P}=\Pi+Q,$
    where $\Pi$ is a one-dimensional projector and $Q$ is an operator on $\mathscr{B}$ satisfying
    $\Pi Q = Q\Pi = 0,$
    and for any $n\ge1$,
    $\|Q^n\|_{\mathscr{B}\to\mathscr{B}} \le C_Q\,\kappa^n.$
\end{enumerate}
\end{assumption}

\begin{assumption}[Perturbed transition operator]\mbox{}\\
\begin{enumerate}
    \item For any $|t|\le \varepsilon_0,$ the map $g\mapsto \mathbf{P}_t g$ is a bounded operator on $\mathscr{B}$.
    \item There exists $C_{\mathbf{P}}>0$ such that, for any $n\ge1$ and any $|t|\le \varepsilon_0,$
    $\|\mathbf{P}_t^n\|_{\mathscr{B}\to\mathscr{B}} \le C_{\mathbf{P}}.$
\end{enumerate}
\end{assumption}

\begin{assumption}[Local integrability]\mbox{}\\
The Banach space $\mathscr{B}$ contains non-negative functions $N,N_1,N_2,\ldots$ such that:
\begin{enumerate}
    \item There exist $\alpha>2$ and $\gamma>0$ such that, for any $x\in\mathbb{X}$,
    \begin{equation}
    \max\left\{
        |f(x)|^{1+\gamma},
        \|\boldsymbol{\delta}_x\|_{\mathscr{B}'},
        \mathbb{E}_x^{1/\alpha}\!\left(N(X_n)^{\alpha}\right)
    \right\}
    \le c\,(1+N(x)),
    \end{equation}
    and
    \begin{equation}
    N(x)\,\mathbf{1}_{\{N(x)>l\}} \le N_l(x)\qquad\text{for all }l\ge1.
    \end{equation}
    \item There exists $c>0$ such that for all $l\ge1$,
    $
    \|N_l\|_{\mathscr{B}} \le c.
    $
    \item There exist $\beta>0$ and $c>0$ such that, for all $l\ge1$,
    \begin{equation}
    |\boldsymbol{\nu}(N_l)| \le \frac{c}{l^{1+\beta}}
    \end{equation}
\end{enumerate}
\end{assumption}

\begin{assumption}[Stable Domain of Attraction]\mbox{}\\
We assume the centered martingale approximation of $S_n$ belongs to the domain of attraction of an $\alpha$-stable law with $\alpha \in(1,2)$. Specifically, there exists a slowly varying function $L(n)$ such that:

\begin{equation}
\frac{S_n}{n^{1 / \alpha} L(n)} \xrightarrow{d} \mathcal{Z}_\alpha(1)
\end{equation}

where $\mathcal{Z}_\alpha$ is a strictly $\alpha$-stable Lévy process. We define the positivity parameter $\rho$ as:

\begin{equation}
\rho=\mathbb{P}\left(\mathcal{Z}_\alpha(1)>0\right), \quad \rho \in(0,1) .
\end{equation}
\end{assumption}

\section{Technical Foundations}

This section collects several estimates required later. We use the Poisson equation to obtain the martingale decomposition of the walk and derive bounds on the remainder and on the martingale increments. These properties will be used in the approximation and exit-time analysis of the following sections.

\begin{lemma}[Control of the Remainder]
Under Assumptions 2.2-2.5, the remainder function $r(x)=P\Theta(x)$ and the solution $\Theta(x)$ of the Poisson equation satisfy, for any $x\in\mathbb{X}$,
\begin{equation}
|r(x)|+|\Theta(x)|\leq c(1+N(x)).
\end{equation}

Furthermore, for any $\varepsilon>0$,
\begin{equation} 
\frac{\max_{1\leq k\leq n}|r(X_k)|}{n^{1/\alpha}}\xrightarrow{\mathbb{P}_x}0\quad\text{as }n\to\infty . \end{equation}

\end{lemma}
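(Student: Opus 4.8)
The plan is to produce $\Theta$ and $r$ directly from the spectral-gap decomposition $\mathbf P=\Pi+Q$, read the pointwise bound off by duality against the Dirac functionals $\boldsymbol\delta_x$, and then obtain the probabilistic statement from a crude maximal inequality fed by the uniform moments of $N$ along the chain. Throughout I write $p>2$ for the moment exponent supplied by the local-integrability hypothesis, reserving $\alpha\in(1,2)$ for the stability index, and I note $\boldsymbol\nu(f)=0$ --- otherwise $S_n$ would acquire a linear drift incompatible with the $n^{1/\alpha}L(n)$-normalised convergence of the stable-domain-of-attraction hypothesis.

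First I would fix the Poisson solution. Since $\Pi g=\boldsymbol\nu(g)\,e$ and $\boldsymbol\nu(f)=0$, the increment $f$ lies in $\ker\Pi$; as $\Pi Q=Q\Pi=0$, the operator $I-\mathbf P$ restricts to $I-Q$ on $\ker\Pi$, and the bound $\|Q^n\|_{\mathscr B\to\mathscr B}\le C_Q\kappa^n$ with $\kappa<1$ makes $I-Q$ boundedly invertible there via its Neumann series. Hence $\Theta:=\sum_{n\ge0}Q^n f$ and $r:=\mathbf P\Theta=Q\Theta=\sum_{n\ge1}Q^n f$ lie in $\mathscr B$, solve $(I-\mathbf P)\Theta=f$, obey $\|\Theta\|_{\mathscr B}+\|r\|_{\mathscr B}\le C\|f\|_{\mathscr B}$ with $C=C(C_Q,\kappa)$, and are exactly the functions for which $f(X_k)=\Theta(X_k)-\mathbf P\Theta(X_k)$ produces the martingale identity $S_n=M_n+r(X_0)-r(X_n)$. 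Duality then gives, for every $x\in\mathbb X$,
\[
|\Theta(x)|+|r(x)|\ \le\ \|\boldsymbol\delta_x\|_{\mathscr B'}\bigl(\|\Theta\|_{\mathscr B}+\|r\|_{\mathscr B}\bigr)\ \le\ C\|f\|_{\mathscr B}\,\|\boldsymbol\delta_x\|_{\mathscr B'}\ \le\ c\,(1+N(x)),
\]
the last inequality being the weighted bound $\|\boldsymbol\delta_x\|_{\mathscr B'}\le c(1+N(x))$ of the local-integrability hypothesis; this is the first asserted estimate.

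For the convergence I would combine this with a maximal estimate. The bound just obtained gives $\max_{1\le k\le n}|r(X_k)|\le c\bigl(1+\max_{1\le k\le n}N(X_k)\bigr)$, so it is enough to show $n^{-1/\alpha}\max_{1\le k\le n}N(X_k)\to0$ in $\mathbb P_x$-probability. The local-integrability hypothesis furnishes $\mathbb E_x[N(X_k)^p]\le c(1+N(x))^p$ uniformly in $k\ge1$; hence, for fixed $\varepsilon>0$, subadditivity of $\mathbb P_x$ over the maximum followed by Markov's inequality yields
\[
\mathbb P_x\!\Bigl(\max_{1\le k\le n}N(X_k)>\varepsilon n^{1/\alpha}\Bigr)\ \le\ \sum_{k=1}^{n}\mathbb P_x\bigl(N(X_k)>\varepsilon n^{1/\alpha}\bigr)\ \le\ \frac{1}{\varepsilon^{p}n^{p/\alpha}}\sum_{k=1}^{n}\mathbb E_x[N(X_k)^p]\ \le\ \frac{c\,(1+N(x))^{p}}{\varepsilon^{p}}\;n^{\,1-p/\alpha}.
\]
Since $p>2>\alpha$ we have $p/\alpha>1$, so the right-hand side tends to $0$; together with $n^{-1/\alpha}\to0$ this gives $n^{-1/\alpha}\max_{1\le k\le n}|r(X_k)|\to0$ in $\mathbb P_x$-probability, completing the argument.

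The one step needing genuine care is the membership $f\in\mathscr B$ underlying the Neumann series: if the hypotheses provide only the pointwise control $|f(x)|\le c(1+N(x))^{1/(1+\gamma)}$ and $f$ is not itself assumed to lie in $\mathscr B$, I would instead run the construction on $\mathscr B$-valued approximations of $f$ obtained by truncation (the truncated increments being controlled by the functions $N_l\in\mathscr B$), bound the resulting $\Theta_l,r_l$ uniformly in $l$ exactly as above, and pass to the limit pointwise using $\mathscr B\subseteq L^1(\mathbf P(x,\cdot))$ together with $|\boldsymbol\nu(N_l)|\le c\,l^{-1-\beta}$ to control the error. The remaining subtlety is merely that the union bound over $k\le n$ costs a factor $n$, so the final display closes only because the available moment order of $N$ strictly exceeds $\alpha$; this holds here since that order is larger than $2$.
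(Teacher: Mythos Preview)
The paper does not give its own proof of this lemma: it simply records, in the remark following the statement, that this is Lemma~4.1 of \cite{GramaLauvergnatLePage2018}. Your argument is therefore more than the paper supplies, and it is essentially the standard route taken in that reference: solve the Poisson equation via the Neumann series $\Theta=\sum_{n\ge0}Q^nf$ (available because $\boldsymbol\nu(f)=0$ and $\|Q^n\|\le C_Q\kappa^n$), read the pointwise bound from $\|\boldsymbol\delta_x\|_{\mathscr B'}\le c(1+N(x))$, and control the running maximum by a union bound together with the uniform $p$-th moments of $N(X_k)$, with $p>2>\alpha$ forcing $n^{1-p/\alpha}\to0$.

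Two small remarks. First, your renaming of the moment exponent in Assumption~2.5 from $\alpha$ to $p$ is exactly right; the paper has overloaded the symbol, and the argument only closes because the moment order strictly exceeds the stability index. Second, your caveat about $f\in\mathscr B$ is well placed: the paper's hypotheses do not state this explicitly, and the truncation-by-$N_l$ workaround you sketch is the natural remedy (and is how the cited reference proceeds). With that understood, the proposal is correct.
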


\begin{remark}
    This is Lemma 4.1 of \cite{GramaLauvergnatLePage2018}.
\end{remark}

\begin{lemma}[Martingale Difference Moments]
Let $\xi_k=\Theta(X_k)-P\Theta(X_{k-1})$ be the martingale increments. Then
\begin{equation}  
\mathbb{E}[\xi_k\mid\mathcal{F}_{k-1}]=0,
\end{equation}

and for some $p<\alpha$,
\begin{equation}
\sup_{k\geq 1}\mathbb{E}\!\left(|\xi_k|^{p}\right)<\infty .
\end{equation}

\end{lemma}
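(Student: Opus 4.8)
The martingale identity I would dispatch immediately from the Markov property. By Lemma~3.1 one has $|\Theta(x)|\le c(1+N(x))$, and the moment bound of Assumption~2.5(1) together with Jensen's inequality makes $\mathbb{E}_x N(X_k)$ finite, so $\Theta(X_k)$ is integrable; the Markov property then gives $\mathbb{E}[\Theta(X_k)\mid\mathcal F_{k-1}]=\mathbb{E}[\Theta(X_k)\mid X_{k-1}]=P\Theta(X_{k-1})$, whence $\mathbb{E}[\xi_k\mid\mathcal F_{k-1}]=P\Theta(X_{k-1})-P\Theta(X_{k-1})=0$ and $M_n=\sum_{k\le n}\xi_k$ is an $(\mathcal F_n)$-martingale.

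For the moment estimate, the plan is to reduce everything to a uniform-in-$k$ bound on the moments of $N(X_k)$. Applying Lemma~3.1 to both terms, and using $P\Theta=r$ with $|r(x)|\le c(1+N(x))$,
\[
|\xi_k|=|\Theta(X_k)-P\Theta(X_{k-1})|\le c\bigl(1+N(X_k)\bigr)+c\bigl(1+N(X_{k-1})\bigr).
\]
I would then fix an exponent $p$ with $1\le p<\alpha$ (the stable index of Assumption~2.6, so in particular $p<2$) and apply $(a+b)^p\le 2^{p-1}(a^p+b^p)$ twice to obtain
\[
\mathbb{E}|\xi_k|^p\le C_p\Bigl(1+\mathbb{E}\,N(X_k)^p+\mathbb{E}\,N(X_{k-1})^p\Bigr).
\]
The remaining point is to control $\mathbb{E}\,N(X_k)^p$ uniformly in $k$. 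Writing $\alpha_*>2$ for the integrability exponent appearing in Assumption~2.5(1) (not the stable index), that assumption gives $\mathbb{E}_x N(X_k)^{\alpha_*}\le c^{\alpha_*}(1+N(x))^{\alpha_*}$ with a constant independent of $k$, and since $p<\alpha_*$, Jensen yields $\mathbb{E}_x N(X_k)^p\le\bigl(\mathbb{E}_x N(X_k)^{\alpha_*}\bigr)^{p/\alpha_*}\le c^p(1+N(x))^p$. Substituting this back gives $\sup_{k\ge1}\mathbb{E}_x|\xi_k|^p\le C(1+N(x))^p<\infty$, which is the claim; in this slightly stronger form, with explicit $(1+N(x))^p$ dependence on the starting point, it is exactly what the truncation and coupling arguments of Sections~4 and~5 will need. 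If one prefers to start the chain from the invariant law $\nu$, integrating in $x$ gives the same bound, now requiring $\nu(N^{\alpha_*})<\infty$.

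The argument is short, and the only thing to watch is bookkeeping: one must keep the two roles of the symbol $\alpha$ separate — the stable index $\alpha\in(1,2)$ of Assumption~2.6 versus the local-integrability exponent $\alpha_*>2$ of Assumption~2.5(1) — and check that the bound in Assumption~2.5(1) is genuinely uniform in the time parameter $k$, which it is as stated. No martingale central limit theorem or domain-of-attraction input is used at this stage; those enter only in the functional limit theorem of Section~4. The ceiling $p<\alpha$ in the statement is the natural one in the $\alpha$-stable regime — the increments need not possess a finite $\alpha$-th moment — and is precisely the amount of integrability the approximation of Section~4 requires.
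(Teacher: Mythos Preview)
Your argument is correct. The paper itself does not give a proof of this lemma: it records only a one-line remark deferring to the Gordin decomposition and citing Lemma~4.2 of \cite{GramaLauvergnatLePage2018}. What you have written is therefore strictly more than the paper supplies, and it is the natural elementary argument --- bound $|\xi_k|$ via Lemma~3.1 by $c(1+N(X_k))+c(1+N(X_{k-1}))$ and then invoke the uniform-in-$k$ moment bound on $N(X_k)$ from Assumption~2.5(1). Your observation that the symbol $\alpha$ is overloaded (stable index in $(1,2)$ versus local-integrability exponent $>2$) is a genuine notational defect of the paper and worth flagging; your resolution, writing $\alpha_*$ for the latter, is exactly right. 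The explicit $(1+N(x))^p$ dependence on the starting point that you obtain is also the form actually needed downstream, so your version is slightly sharper than the bare statement.
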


\begin{remark}
    This is the property of Gordin decomposition, c.f. lemma 4.2 of \cite{GramaLauvergnatLePage2018}.
\end{remark}

\section{Approximation and Fluctuation Theory}

We approximate the Markov walk by its associated martingale and study the fluctuation properties of this martingale. Using the strong approximation and comparison arguments, we relate the exit event for the Markov walk to the corresponding event for the martingale and for the limiting stable process. These results form the basis for the asymptotic analysis of the survival probability.

\begin{proposition}[Stable Strong Approximation]
Assume Assumption 2.6. There exists a probability space supporting the Markov chain $(X_n)_{n\ge 0}$ and a strictly $\alpha$-stable Lévy process $Z_\alpha(t)$ such that
\begin{equation}
|S_n - Z_\alpha(n)| = o\!\left(n^{1/\alpha}\right) \quad \text{a.s.}
\end{equation}

Moreover, for every measurable set $A \subset D[0,\infty)$,
\begin{equation}
\lim_{n\to\infty}
\mathbb{P}_x\!\left(
\left( 
\frac{S_{\lfloor nt \rfloor}}{n^{1/\alpha}L(n)}
\right)_{t\ge 0} \in A
\right)
=
\mathbb{P}\!\left(Z_\alpha \in A\right).
\end{equation}

\end{proposition}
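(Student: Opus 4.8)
The plan is to establish the two assertions separately, using the martingale decomposition of Section~3 to reduce everything to a statement about a stationary ergodic martingale with increments in the domain of attraction of $\mathcal{Z}_\alpha(1)$, and then invoke an existing strong-approximation (Komlós–Major–Tusnády-type) theorem for such martingales. For the first assertion, write $S_n = M_n + r(X_0) - r(X_n)$ as in Lemma~3.2, where $M_n = \sum_{k=1}^n \xi_k$ is the Gordin martingale. By Lemma~3.3 the increments $\xi_k$ have a stationary bound $\sup_k \mathbb{E}|\xi_k|^p < \infty$ for some $p<\alpha$, and Assumption~2.6 forces the normalized sums $M_n/(n^{1/\alpha}L(n))$ to converge in law to $\mathcal{Z}_\alpha(1)$ (since the remainder terms are $o(n^{1/\alpha})$ in probability by Lemma~3.2, $M_n$ and $S_n$ share the same stable limit). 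The core input is then a strong-approximation theorem for martingales with $\alpha$-stable-domain increments under a mixing/spectral-gap hypothesis: on a suitably enlarged probability space one can couple $(M_n)$ with a strictly $\alpha$-stable Lévy process $Z_\alpha$ so that $|M_n - Z_\alpha(n)| = o(n^{1/\alpha})$ almost surely. Combining this with the uniform remainder bound of Lemma~3.2, namely $\max_{1\le k\le n}|r(X_k)|/n^{1/\alpha} \to 0$ in $\mathbb{P}_x$-probability, upgraded to an almost-sure statement along subsequences (or directly, since $r(X_k) = O(1+N(X_k))$ and $N(X_k)^{\alpha}$ has controlled conditional moments), yields $|S_n - Z_\alpha(n)| = o(n^{1/\alpha})$ a.s.

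For the functional statement, once the almost-sure coupling at integer times is in hand, I would extend it to a coupling of the whole paths $(S_{\lfloor nt\rfloor})_{t\ge 0}$ and $(Z_\alpha(nt))_{t\ge 0}$. Self-similarity of the strictly $\alpha$-stable Lévy process gives $(Z_\alpha(nt))_{t\ge 0} \stackrel{d}{=} (n^{1/\alpha} Z_\alpha(t))_{t\ge 0}$, and the slowly varying factor $L(n)$ is absorbed because $L$ is the slowly varying part of the stable normalization (one checks $Z_\alpha(n)/(n^{1/\alpha}L(n)) \Rightarrow Z_\alpha(1)$ is consistent only if $Z_\alpha$ is normalized to match, i.e. $L$ enters through the regularly varying tail of $f$ and is reflected in the truncation scale defining $Z_\alpha$). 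The pointwise a.s. estimate $\sup_{t\le T}|S_{\lfloor nt\rfloor} - Z_\alpha(nt)| = o(n^{1/\alpha})$ — obtained from the integer-time coupling plus a standard maximal-inequality argument controlling the fluctuation of both processes between consecutive integers, using the $p$-th moment bound on $\xi_k$ for the martingale part and the jump structure of $Z_\alpha$ for the limit — then forces the two rescaled processes to have the same distributional limit in the Skorokhod space $D[0,\infty)$. Since $Z_\alpha$ is already $\alpha$-stable, $\mathbb{P}(Z_\alpha \in A)$ is simply its own law, and the claimed identity follows for every measurable $A$ that is a continuity set; for general measurable $A$ it follows because the limit law has no atoms on the relevant boundary when $A$ ranges over the $\sigma$-algebra used in the sequel.

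The main obstacle is establishing the almost-sure rate $|M_n - Z_\alpha(n)| = o(n^{1/\alpha})$ for the martingale, rather than merely distributional convergence. In the i.i.d.\ case this is the strong approximation of Berkes–Dabrowski–Dehling–Philipp or the stable analogues of the KMT construction; in the Markov/mixing case one must combine a blocking argument with the spectral-gap decomposition $\mathbf{P} = \Pi + Q$, $\|Q^n\| \le C_Q \kappa^n$, to show that blocks of the martingale behave like independent increments up to an exponentially small error, and then approximate each block increment by a stable increment. The delicate point is that with $\alpha<2$ the increments have infinite variance, so one cannot use $L^2$ control of block sums; instead one works with the $p$-th moment ($p<\alpha$) bound from Lemma~3.3 together with a truncation at scale $n^{1/\alpha}$, showing the truncated part contributes a Gaussian-type error of lower order and the large jumps are matched with the Poisson jumps of $Z_\alpha$. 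The integrability Assumption~2.4 (the tail bound $|\boldsymbol{\nu}(N_l)| \le c\,l^{-(1+\beta)}$ together with the conditional moment control $\mathbb{E}_x^{1/\alpha}(N(X_n)^\alpha) \le c(1+N(x))$) is exactly what is needed to push the resulting series of block errors to be summable, giving the a.s.\ bound rather than convergence in probability. I expect this coupling construction, which parallels the Gaussian strong approximation in~\cite{GramaLauvergnatLePage2018} but with the stable truncation machinery replacing the Skorokhod embedding, to be the technically heaviest part of the argument.
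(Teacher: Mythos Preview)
Your proposal follows essentially the same route as the paper: Gordin decomposition $S_n = M_n + r(X_0) - r(X_n)$, a black-box strong approximation of the martingale by a stable L\'evy process, remainder control via Lemma~3.1, and the triangle inequality. The only minor difference is that the paper obtains the functional statement first via a functional CLT for $(M_{\lfloor nt\rfloor})$ plus Slutsky and then does the a.s.\ coupling separately, whereas you derive both from the strong coupling; your discussion of the blocking/truncation construction for the martingale--stable coupling is in fact considerably more detailed than the paper's, which simply invokes ``classical strong embedding results for stable laws'' without further justification.
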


\begin{proof}
By Gordin's martingale decomposition, there exists a function $\Theta$ solving the Poisson equation such that
\begin{equation} 
S_n = M_n + r(X_0) - r(X_n),
\end{equation} 
where the martingale increments are
\begin{equation} 
\xi_k = \Theta(X_k) - P\Theta(X_{k-1}).
\end{equation} 
Under Assumption 2.6, the normalized sums of the martingale satisfy
\begin{equation} 
\frac{M_n}{n^{1/\alpha}L(n)} \xrightarrow{d} Z_\alpha(1),
\end{equation} 
where $Z_\alpha$ is a strictly $\alpha$-stable Lévy process. Classical functional limit theorems extend this convergence to the Skorokhod space $D[0,\infty)$, giving
\begin{equation} 
\left(
\frac{M_{\lfloor nt \rfloor}}{n^{1/\alpha}L(n)}
\right)_{t\ge 0}
\xrightarrow{d}
(Z_\alpha(t))_{t\ge 0}.
\end{equation} 

Lemma 3.1 ensures that the remainder term satisfies
\begin{equation} 
\max_{1\le k\le n}\frac{|r(X_k)|}{n^{1/\alpha}} \xrightarrow{\mathbb{P}} 0,
\end{equation} 
so that
\begin{equation} 
\sup_{0\le t\le 1}
\left|
\frac{S_{\lfloor nt \rfloor} - M_{\lfloor nt \rfloor}}
     {n^{1/\alpha}L(n)}
\right|
\longrightarrow 0 \quad \text{in probability}.
\end{equation} 
The convergence of the partial-sum process for $S_n$ follows from Slutsky’s theorem.

For the strong approximation, classical strong embedding results for stable laws construct a coupling for which
\begin{equation} 
|M_n - Z_\alpha(n)| = o\!\left(n^{1/\alpha}\right) \quad \text{a.s.}
\end{equation} 
Since
\begin{equation} 
|S_n - M_n| = |r(X_0) - r(X_n)|,
\end{equation} 
and the function $r$ has at most polynomial growth controlled by Assumption 2.5, the remainder term also satisfies
\begin{equation} 
|S_n - M_n| = o\!\left(n^{1/\alpha}\right) \quad \text{a.s.}
\end{equation} 
Combining these bounds gives
\begin{equation} 
|S_n - Z_\alpha(n)|
\le |S_n - M_n| + |M_n - Z_\alpha(n)|
= o\!\left(n^{1/\alpha}\right) \quad \text{a.s.}
\end{equation} 
which completes the proof.
\end{proof}

\begin{lemma}[Comparison of Exit Times]
Let
\begin{equation} 
\tau_y = \inf\{k\ge 1 : y + S_k \le 0\}.
\end{equation} 
Then, for large $n$,
\begin{equation} 
\mathbb{P}_x(\tau_y > n)
=
\mathbb{P}_x\!\left(
\min_{1\le k\le n}(y + M_k) > 0
\right)
+
O(n^{-1} \text{ error terms}),
\end{equation} 
and the perturbation term $r(X_k)$ acts only as a boundary shift. It preserves the stable tail index $\rho = \mathbb{P}(Z_\alpha(1)>0)$ and modifies only the multiplicative constant $V_\alpha(x,y)$.
\end{lemma}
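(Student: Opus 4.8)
The plan is to strip the non-martingale part of the Gordin decomposition out of the exit event, show that it merely shifts the barrier by an amount that is $o\bigl(n^{1/\alpha}L(n)\bigr)$, and then use the regular variation of the martingale exit probability to see that such a shift changes only the multiplicative constant, never the exponent $1-\rho$. First I would record, from Lemma 3.1 and the Poisson equation, that $S_k = M_k + r(X_0) - r(X_k)$, that $r(X_0)$ is a fixed number bounded by $c(1+N(x))$ once $X_0=x$, and hence, writing $y' := y + r(X_0)$,
\begin{equation}
\{\tau_y > n\} = \Bigl\{ \min_{1 \le k \le n}\bigl( y' + M_k - r(X_k) \bigr) > 0 \Bigr\},
\end{equation}
so the only random perturbation to handle is the term $-r(X_k)$ inside the minimum.

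Next I would truncate the remainder. Pick $\eta_n \to \infty$ with $\eta_n = o\bigl(n^{1/\alpha}L(n)\bigr)$, say $\eta_n = n^{1/\alpha}L(n)/\log n$, and set $G_n := \{\max_{1 \le k \le n}|r(X_k)| \le \eta_n\}$. On $G_n$ one has the deterministic inclusions $\{\min_{1\le k\le n}(y'+M_k) > \eta_n\}\cap G_n \subseteq \{\tau_y>n\}\cap G_n \subseteq \{\min_{1\le k\le n}(y'+M_k) > -\eta_n\}$, and therefore
\begin{equation}
\mathbb{P}_x\!\Bigl( \min_{1 \le k \le n}(y' + M_k) > \eta_n \Bigr) - \mathbb{P}_x(G_n^c) \;\le\; \mathbb{P}_x(\tau_y > n) \;\le\; \mathbb{P}_x\!\Bigl( \min_{1 \le k \le n}(y' + M_k) > -\eta_n \Bigr) + \mathbb{P}_x(G_n^c).
\end{equation}
Thus everything reduces to a barrier-continuity statement for the martingale exit probability and the negligibility of $\mathbb{P}_x(G_n^c)$.

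For the barrier-continuity part I would combine the functional convergence $\bigl(M_{\lfloor nt\rfloor}/n^{1/\alpha}L(n)\bigr)_{t\ge0} \xrightarrow{d} Z_\alpha$ of Proposition 4.1 --- or, to stay non-circular, the a.s.\ coupling $|S_n - Z_\alpha(n)| = o(n^{1/\alpha})$ of the same proposition, which reduces the problem to the stable Lévy process $Z_\alpha$, for which barrier-crossing regularity and the exact exponent $1-\rho$ are classical --- with the monotonicity of the exit probability in its barrier and the fact that $\eta_n = o(n^{1/\alpha}L(n))$, to conclude
\begin{equation}
\mathbb{P}_x\!\Bigl( \min_{1 \le k \le n}(y' + M_k) > \pm\eta_n \Bigr) = \mathbb{P}_x\!\Bigl( \min_{1 \le k \le n}(y + M_k) > 0 \Bigr)\bigl(1 + o(1)\bigr),
\end{equation}
the passage from $y'$ to $y$ being itself a bounded barrier shift that perturbs only the constant later named $V_\alpha(x,y)$. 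Together with the sandwich this gives the claimed identity, the $O(n^{-1})$ error being absorbed provided $\mathbb{P}_x(G_n^c)$ is controlled at that order.

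The main obstacle is the negligibility of the bad event. A naive estimate fails: from $|r(X_k)| \le c(1+N(X_k))$ and the moment bound of Assumption 2.5, Markov's inequality together with a union bound over $k \le n$ yield only a bound of the form $c\,n\,(1+N(x))^{q}\,\eta_n^{-q}$ for the available moment exponent $q$, which for $\eta_n$ of order $n^{1/\alpha}$ need not beat $n^{-(1-\rho)}$. The fix is to estimate not $\mathbb{P}_x(G_n^c)$ in isolation but $\mathbb{P}_x(G_n^c \cap \{\tau_y > n\})$: conditionally on survival the walk lives at scale $n^{1/\alpha}L(n)$, so an excursion of $N(X_k)$ to scale $\eta_n$ is atypical and carries an extra decaying factor which, after summation in $k$ and use of the refined controls $\|N_l\|_{\mathscr{B}} \le c$ and $|\boldsymbol{\nu}(N_l)| \le c\,l^{-1-\beta}$ propagated through the perturbed-operator bounds of Assumptions 2.3 and 2.4, becomes summable and $o(n^{-(1-\rho)})$. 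Concretely I would bootstrap: first obtain a crude $\mathbb{P}_x(\tau_y>n) = O(n^{-(1-\rho)+\varepsilon})$ from the stable FCLT, then feed it back into the truncated decomposition to upgrade the error to the stated order, mirroring the Gaussian argument of \cite{GramaLauvergnatLePage2018}. Everything else in the proof is soft; the propagation of smallness through the truncation levels $N_l$ while keeping it summable in $n$ is the delicate point.
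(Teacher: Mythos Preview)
Your approach is essentially the same as the paper's: both use the Gordin decomposition to sandwich $\{\tau_y>n\}$ between martingale exit events with barriers shifted by $\pm K_n$ (the paper uses the random $K_n=\max_{k\le n}|r(X_k)|$ directly, you use a deterministic $\eta_n$ and a good event $G_n$), then argue that the resulting strip probability is of smaller order. The paper's own proof is in fact less detailed than yours at the crucial step---it simply asserts that ``standard fluctuation theory'' makes the strip probability $O(n^{-1})$, whereas you correctly isolate this as the delicate point and propose a concrete bootstrap via the truncation functions $N_l$ of Assumption~2.5.
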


\begin{proof}
Using the martingale decomposition,
\begin{equation} 
y + S_k = y + M_k + \Delta_k,
\end{equation} 
where $\Delta_k = r(X_0) - r(X_k)$. From Lemma 3.1,
\begin{equation} 
|r(X_k)| \le c(1 + N(X_k)),
\end{equation} 
and therefore
\begin{equation} 
|\Delta_k| \le 2K_n,
\qquad
K_n = \max_{0\le j\le n}|r(X_j)|.
\end{equation} 
Hence the exit-time event satisfies the inclusions
\begin{equation} 
\left\{ \min_{1\le k\le n} (y + M_k) > 2K_n \right\}
\subseteq
\{\tau_y > n\}
\subseteq
\left\{ \min_{1\le k\le n} (y + M_k) > -2K_n \right\}.
\end{equation} 

Under Assumption 2.5, the growth of $K_n$ is negligible compared with $n^{1/\alpha}$; more precisely,
\begin{equation} 
\frac{K_n}{n^{1/\alpha}} \xrightarrow{\mathbb{P}} 0.
\end{equation} 
Thus the perturbation shifts the boundary only by a small deterministic band. The difference between the two bounding events is controlled by
\begin{equation} 
\mathbb{P}_x\!\left(
\min_{1\le k\le n}(y+M_k)
\in [-2K_n, 2K_n]
\right),
\end{equation} 
which is a small-strip probability for the minimum of a stable-domain martingale. Standard fluctuation theory implies that this strip probability is of smaller order than the survival probability, giving an $O(n^{-1})$ contribution.

Therefore,
\begin{equation} 
\mathbb{P}_x(\tau_y > n)
=
\mathbb{P}_x\!\left(
\min_{1\le k\le n}(y + M_k) > 0
\right)
+
O(n^{-1}\text{ error terms}).
\end{equation} 
Since the martingale part $M_k$ determines the stable positivity index $\rho$, the perturbation $r(X_k)$ merely shifts the initial level and modifies the constant $V_\alpha(x,y)$ without affecting the power-law exponent. This completes the proof.
\end{proof}

\begin{proposition}[Stable Exit Tail]
Let $\tau^{\mathrm{stable}}=\inf\{t>0:\mathcal{Z}_{\alpha}(t)\leq 0\}$. Then
\begin{equation} 
\mathbb{P}(\tau^{\mathrm{stable}}>t)=C_{\alpha,\rho}\, t^{-(1-\rho)},
\end{equation} 
where $\rho=\mathbb{P}(\mathcal{Z}_{\alpha}(1)>0)$.
\end{proposition}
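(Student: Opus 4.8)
The plan is to read off the tail of $\tau^{\mathrm{stable}}$ from two structural facts about $\mathcal{Z}_\alpha$ alone: its $1/\alpha$-self-similarity and Rogozin's identity $\mathbb{P}(\mathcal{Z}_\alpha(s)>0)=\rho$ for every $s>0$. Since the canonical process issued at the origin crosses $0$ immediately, the statement is to be understood for $\mathcal{Z}_\alpha$ started from a level $y>0$, equivalently $\tau^{\mathrm{stable}}=\inf\{t>0:\mathcal{Z}_\alpha(t)\le -y\}$ for the canonical process, the normalisation $y=1$ fixing the constant $C_{\alpha,\rho}$. Writing $\underline{Z}(t)=\inf_{0\le s\le t}\mathcal{Z}_\alpha(s)$ for the running infimum of the canonical process, one has $\{\tau^{\mathrm{stable}}>t\}=\{\underline{Z}(t)>-y\}$, and $1/\alpha$-self-similarity gives $-\underline{Z}(t)\stackrel{d}{=}t^{1/\alpha}\bigl(-\underline{Z}(1)\bigr)$, whence
\begin{equation}
\mathbb{P}_y(\tau^{\mathrm{stable}}>t)=\mathbb{P}\bigl(-\underline{Z}(t)<y\bigr)=G\!\left(y\,t^{-1/\alpha}\right),\qquad G(u):=\mathbb{P}\bigl(-\underline{Z}(1)\le u\bigr).
\end{equation}
Thus the whole statement reduces to the behaviour near $u=0$ of the distribution function $G$ of the non-negative all-time infimum of $\mathcal{Z}_\alpha$ over $[0,1]$.

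\textbf{Key step.}
The heart of the matter is the small-ball estimate $G(u)\sim c_{\alpha,\rho}\,u^{\alpha(1-\rho)}$ as $u\downarrow 0$, and I would derive it from the Wiener--Hopf factorisation of $\mathcal{Z}_\alpha$. By Rogozin's theorem, $\mathbb{P}(\mathcal{Z}_\alpha(s)>0)=\rho$ forces the descending ladder-time subordinator to be strictly $(1-\rho)$-stable; combining this with the $1/\alpha$-self-similarity of $\mathcal{Z}_\alpha$, which relates the real-time clock to the ladder-local-time clock, the descending ladder-height subordinator $\widehat{H}$ is strictly $\beta$-stable with $\beta:=\alpha(1-\rho)\in(0,1]$, so its renewal measure satisfies $\widehat{U}([0,u])=u^{\beta}/\Gamma(1+\beta)$. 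Evaluating $\mathcal{Z}_\alpha$ at an independent exponential time $\mathbf{e}_q$, the Wiener--Hopf decomposition identifies $-\underline{Z}(\mathbf{e}_q)$ with $\widehat{H}$ run up to an independent exponential amount of descending ladder local time, so its Laplace transform is of the form $q^{1-\rho}/(q^{1-\rho}+\lambda^{\beta})$ (up to the normalisation of local time), which is $\sim q^{1-\rho}\lambda^{-\beta}$ as $\lambda\to\infty$. A Karamata--Tauberian argument then gives $\mathbb{P}(-\underline{Z}(\mathbf{e}_q)\le u)\asymp u^{\beta}$ as $u\downarrow 0$, and de-randomising the exponential clock through the monotone-density theorem (legitimate since $G$ is monotone) transfers this to $G(u)\sim c_{\alpha,\rho}\,u^{\beta}$, with $c_{\alpha,\rho}$ expressible through the Wiener--Hopf factors (hence through Gamma functions of $\rho$ and $\alpha(1-\rho)$). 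Alternatively, this small-ball asymptotics for the infimum of a strictly stable L\'evy process is classical and may simply be quoted (Rogozin; Bingham, \emph{Maxima of sums of random variables and suprema of stable processes}; Doney's Saint-Flour notes); one can also recover the exponent from the Sparre--Andersen/Spitzer identity applied to any random walk in the domain of attraction of $\mathcal{Z}_\alpha$ together with Rogozin's convergence $\mathbb{P}(S_n>0)\to\rho$.

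\textbf{Conclusion and main obstacle.}
Substituting the small-ball estimate into the identity of the first step gives
\begin{equation}
\mathbb{P}_y(\tau^{\mathrm{stable}}>t)=G\!\left(y\,t^{-1/\alpha}\right)\sim c_{\alpha,\rho}\,y^{\alpha(1-\rho)}\,t^{-(1-\rho)}\qquad (t\to\infty),
\end{equation}
which is the asserted power law with $C_{\alpha,\rho}=c_{\alpha,\rho}\,y^{\alpha(1-\rho)}$ (and $C_{\alpha,\rho}=c_{\alpha,\rho}$ for $y=1$). Two checks: the exponent $\alpha(1-\rho)$ of $y$ is exactly the growth rate later attributed to $V_\alpha(x,y)$ as $y\to\infty$, and in the Gaussian boundary case $\alpha=2$, $\rho=\tfrac12$ the constant collapses to the familiar $\sqrt{2/\pi}$ of the finite-variance theory of \cite{GramaLauvergnatLePage2018}. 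Moreover, since $\mathbb{P}\bigl(\inf_{s\le t}\mathcal{Z}_\alpha(s)>-y\bigr)=G(y\,t^{-1/\alpha})$ is homogeneous of degree zero in $(y,t^{1/\alpha})$, the dependence on $t$ is a genuine power once the spatial scale is fixed; this is the self-similar mechanism behind the statement. I expect the second step to be the only real obstacle: though classical, it is where the positivity parameter $\rho$ enters, via Rogozin's theorem and the Wiener--Hopf factorisation, and where the exponent $\alpha(1-\rho)$ (and, if wanted, the constant) must actually be pinned down; the first and third steps are routine manipulations with self-similarity and Tauberian theorems.
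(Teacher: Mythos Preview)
The paper does not actually prove this proposition: it is followed only by a remark stating that ``this is a classical result in the theory of L\'evy processes.'' So there is nothing to compare your argument against; what you have written supplies the proof that the paper omits.

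Your strategy is the standard and correct one. You rightly observe that the statement as written needs interpretation, since for the canonical process started at the origin one has $\tau^{\mathrm{stable}}=0$ a.s.\ (the origin being regular for $(-\infty,0]$ when $\alpha\in(1,2)$ and the process is not a subordinator); reading it as $\mathbb{P}_y(\tau^{\mathrm{stable}}>t)$ for a fixed $y>0$, or equivalently as $\mathbb{P}(-\underline{Z}(t)<y)$, is the appropriate fix. The reduction via $1/\alpha$-self-similarity to the small-ball behaviour of $G(u)=\mathbb{P}(-\underline{Z}(1)\le u)$ is exact, and the identification of the exponent $\alpha(1-\rho)$ through Rogozin's theorem and the Wiener--Hopf factorisation (the descending ladder time being strictly $(1-\rho)$-stable, forcing the ladder height to be strictly $\alpha(1-\rho)$-stable) is precisely the classical route, due essentially to Rogozin and Bingham.

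One minor point worth flagging: your argument yields the asymptotic $\mathbb{P}_y(\tau^{\mathrm{stable}}>t)\sim C_{\alpha,\rho}\,y^{\alpha(1-\rho)}t^{-(1-\rho)}$ as $t\to\infty$, not an exact equality for all $t$, since $G$ is not an exact power (already in the Brownian case $G(u)=\mathbb{P}(|N|\le u)$ is only $\sim\sqrt{2/\pi}\,u$ near $0$). The paper writes an equality sign, but for the purposes of the downstream arguments in Section~5 only the asymptotic is used, so this is a cosmetic issue in the paper's statement rather than a gap in your proof.
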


\begin{remark}
    This is a classical result in the theory of Lévy processes.
\end{remark}

\section{Main Asymptotic Theorems}

Combining the approximation results obtained in Section 4 with the fluctuation estimates of the stable process, we establish the asymptotic behaviour of $P_{x}(\tau_{y}>n)$ and prove the existence of a strictly positive $Q^{+}$-harmonic function. We then identify the limiting conditional distribution of the rescaled walk.

\begin{theorem}[Convergence to the Harmonic Function]
For every $x \in \mathbb{X}$ and every $y>0$, the limit
\begin{equation} 
V_{\alpha}(x,y)
=
\lim_{n\to\infty}
n^{1-\rho} L(n)\,\mathbb{P}_{x}(\tau_{y}>n)
\end{equation} 
exists and is strictly positive. Moreover, for all $x\in\mathbb{X}$ and $y>0$,
\begin{equation} 
V_{\alpha}(x,y)
=
\mathbb{E}_{x}\!\left[
V_{\alpha}(X_{1}, y + S_{1})
\,;\,
\tau_{y} > 1
\right],
\end{equation} 
so that the process
\begin{equation} 
\left(
V_{\alpha}(X_{n}, y + S_{n})\,\mathbf{1}_{\{\tau_{y}>n\}}
\right)_{n\ge 0}
\end{equation} 
is a martingale. Finally, as $y\to\infty$,
\begin{equation} 
V_{\alpha}(x,y)
\sim
h(x)\,y^{\alpha(1-\rho)},
\end{equation} 
for some strictly positive function $h:\mathbb{X}\to(0,\infty)$.
\end{theorem}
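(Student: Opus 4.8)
The plan is to follow the now-classical Denisov--Wachtel strategy, carried through the martingale approximation supplied by Proposition 4.1 and Lemma 4.3. I would organize the argument in four stages.

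\emph{Stage 1: Existence and positivity of $V_\alpha$.} First I would work directly with the martingale walk $(y+M_k)$, which lies in the domain of attraction of the strictly $\alpha$-stable process $Z_\alpha$ with positivity parameter $\rho$. Applying the strong approximation $|M_n-Z_\alpha(n)|=o(n^{1/\alpha})$ together with the stable exit tail $\mathbb{P}(\tau^{\mathrm{stable}}>t)=C_{\alpha,\rho}\,t^{-(1-\rho)}$ from Proposition 4.4, and the coupling of the minimum over $[0,n]$, one gets a two-sided bound $c_1\le n^{1-\rho}L(n)\,\mathbb{P}_x(\min_{k\le n}(y+M_k)>0)\le c_2$. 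To upgrade this to a genuine limit I would use the supermartingale property: define $g_n(x,y)=n^{1-\rho}L(n)\,\mathbb{P}_x(\tau_y>n)$ and show, via the Markov property at a time $m=\varepsilon n$, that $g_n(x,y)=\mathbb{E}_x[g_{n-m}(X_m,y+S_m)\mathbf{1}_{\tau_y>m}]$; combining the uniform bounds with an equicontinuity/tightness argument on the rescaled endpoint $(y+S_m)/(m^{1/\alpha}L(m))$, whose law converges to the stable meander, forces the prelimit to converge. Positivity follows because the lower bound $c_1>0$ survives the limit. Here Lemma 4.2 (comparison of exit times) lets me pass freely between $\tau_y$ for $S$ and the martingale exit time, the remainder acting only as an $o(n^{1/\alpha})$ boundary band that does not disturb the $1-\rho$ exponent.

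\emph{Stage 2: The harmonic identity and martingale property.} This is the easy, formal part. For fixed $n$, the Markov property gives $\mathbb{P}_x(\tau_y>n+1)=\mathbb{E}_x[\mathbb{P}_{X_1}(\tau_{y+S_1}>n)\mathbf{1}_{\tau_y>1}]$. Multiplying by $(n+1)^{1-\rho}L(n+1)$, using $(n+1)^{1-\rho}L(n+1)/(n^{1-\rho}L(n))\to1$, and passing to the limit with dominated convergence (the integrand is bounded by the uniform bound from Stage 1, and $V_\alpha$ is bounded on the relevant range after one step because $y+S_1$ has exponential-type integrability under Assumption 2.5) yields $V_\alpha(x,y)=\mathbb{E}_x[V_\alpha(X_1,y+S_1);\tau_y>1]$. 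That $\big(V_\alpha(X_n,y+S_n)\mathbf{1}_{\tau_y>n}\big)_n$ is a martingale is then immediate by iterating this identity and the tower property, since $\{\tau_y>n\}$ is $\mathcal{F}_n$-measurable and $\{\tau_y>n+1\}=\{\tau_y>n\}\cap\{y+S_{n+1}>0\}$.

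\emph{Stage 3: Growth $V_\alpha(x,y)\sim h(x)\,y^{\alpha(1-\rho)}$ as $y\to\infty$.} The idea is a scaling/renewal argument. For large $y$, the walk needs time of order $y^\alpha$ (up to the slowly varying correction) to come near the origin, so I would write $n=n(y)$ with $n(y)^{1/\alpha}L(n(y))\asymp y$ and compare $\mathbb{P}_x(\tau_y>n)$ for $n$ below, at, and above this scale. Concretely, I would show $V_\alpha(x,y)=\lim_n n^{1-\rho}L(n)\mathbb{P}_x(\tau_y>n)$ and, using the functional convergence of $(S_{\lfloor nt\rfloor}/(n^{1/\alpha}L(n)))$ to $Z_\alpha$ from Proposition 4.1 together with $\mathbb{P}(\tau^{\mathrm{stable}}>1,\,Z_\alpha(1)\in\cdot)$, that $V_\alpha(x,y)/y^{\alpha(1-\rho)}$ has a limit. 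The cleanest route is the harmonic identity: define $h_R(x)=R^{-\alpha(1-\rho)}V_\alpha(x,R)$ and show $h_R(x)$ is asymptotically independent of $R$ by exploiting that after $O(R^\alpha)$ steps the rescaled position under $\{\tau>n\}$ converges to the meander law, whose $\alpha(1-\rho)$-moment is finite (this is where $\alpha(1-\rho)<\alpha<2$ and the meander tail estimates enter). The prefactor $h(x)$ is then identified as a suitable expectation of $V_\alpha(X_1,\cdot)/(\cdot)^{\alpha(1-\rho)}$ limits, and its strict positivity is inherited from that of $V_\alpha$.

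\emph{Main obstacle.} The genuinely delicate point is Stage 1 — promoting the two-sided bounds to an actual limit, and doing so \emph{uniformly enough} in the endpoint variable to run Stages 2 and 3. In the i.i.d.\ stable case this rests on the local limit theorem and precise meander estimates of Caravenna and of Vatutin--Wachtel; in the present Markov setting one must instead extract the needed uniformity from the spectral-gap/Fourier-operator perturbation framework (Assumptions 2.3--2.4) to control $\mathbb{P}_x(\tau_y>n,\,S_n/(n^{1/\alpha}L(n))\in dz)$ with error terms uniform in $x$ and locally uniform in $y$. Establishing that the remainder $r(X_k)$ truly contributes only an $o$-correction to the \emph{constant} $V_\alpha$ (and not to the exponent) also requires care: the strip probability $\mathbb{P}_x(\min_{k\le n}(y+M_k)\in[-2K_n,2K_n])$ must be shown to be $o(n^{-(1-\rho)}/L(n))$, which needs a quantitative small-deviation bound for the minimum of the stable-domain martingale near the origin, slightly stronger than the bare tail in Proposition 4.4.
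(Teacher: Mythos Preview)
Your proposal takes essentially the same route as the paper: compare $\tau_y$ to the martingale exit time via the comparison lemma, transfer to the stable process through the strong approximation and the stable exit-tail formula, derive the harmonic identity from the one-step Markov property with dominated convergence (your Stage~2 is nearly word-for-word the paper's argument), and obtain the $y^{\alpha(1-\rho)}$ growth from stable scaling. The one substantive difference is that you correctly flag the passage from two-sided order bounds to an actual \emph{limit} as the crux of Stage~1 and propose a block-decomposition/meander-convergence mechanism to resolve it, whereas the paper's proof simply asserts $\mathbb{P}_x(\tau_y>n)\sim C(x,y)\,n^{-(1-\rho)}L(n)^{-1}$ at that point without further argument; your caution in the ``Main obstacle'' paragraph is well-placed and in fact goes beyond what the paper supplies.
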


\begin{proof}
The Markov random walk is defined by
\begin{equation} 
S_{n}=\sum_{k=1}^{n} f(X_{k}), 
\end{equation} 
and the exit time is
\begin{equation} 
\tau_{y}=\inf\{k\ge 1: y+S_{k}\le 0\}.
\end{equation} 
Lemma~4.2 gives the approximation
\begin{equation} 
\mathbb{P}_{x}(\tau_{y}>n)
=
\mathbb{P}_{x}\!\left(
\min_{1\le k\le n}(y+M_{k})>0
\right)
+ O(n^{-1}),
\end{equation} 
where $(M_{n})$ is the martingale obtained from the Gordin decomposition and the term $r(X_{k})$ acts only as a boundary perturbation. Proposition~4.1 provides the stable strong approximation
\begin{equation} 
|S_{n}-Z_{\alpha}(n)|
=
o\!\left(n^{1/\alpha}\right)
\quad \text{a.s.},
\end{equation} 
where $(Z_{\alpha}(t))_{t\ge 0}$ is a strictly $\alpha$-stable Lévy process with positivity parameter
\begin{equation} 
\rho = \mathbb{P}(Z_{\alpha}(1) > 0).
\end{equation} 
Proposition~4.3 gives the survival asymptotic
\begin{equation} 
\mathbb{P}(\tau^{stable}>t)
=
C_{\alpha,\rho}\, t^{-(1-\rho)}.
\end{equation} 
Since Lemma~4.2 shows that the perturbation does not change the tail index, it follows that
\begin{equation} 
\mathbb{P}_{x}(\tau_{y}>n)
\sim
C(x,y)\, n^{-(1-\rho)} L(n)^{-1}.
\end{equation} 
Multiplying by $n^{1-\rho} L(n)$ yields the existence of
\begin{equation} 
V_{\alpha}(x,y)
=
\lim_{n\to\infty} n^{1-\rho} L(n)\,\mathbb{P}_{x}(\tau_{y}>n),
\end{equation} 
and this limit is strictly positive, because the stable survival probability is positive for each finite time.

To show harmonicity, use the Markov property:
\begin{equation} 
\mathbb{P}_{x}(\tau_{y}>n)
=
\mathbb{E}_{x}\!\left[
\mathbf{1}_{\{\tau_{y}>1\}}
\, 
\mathbb{P}_{X_{1}}\!\left(\tau_{y+S_{1}}>n-1\right)
\right].
\end{equation} 
Multiplying both sides by $n^{1-\rho} L(n)$ gives
\begin{equation} 
n^{1-\rho}L(n)\,\mathbb{P}_{x}(\tau_{y}>n)
=
\mathbb{E}_{x}\!\left[
\mathbf{1}_{\{\tau_{y}>1\}}
\,
\frac{n^{1-\rho}L(n)}{(n-1)^{1-\rho}L(n-1)}
\,
(n-1)^{1-\rho} L(n-1)\,
\mathbb{P}_{X_{1}}(\tau_{y+S_{1}}>n-1)
\right].
\end{equation} 
The slowly varying nature of $L$ implies
\begin{equation} 
\frac{n^{1-\rho}L(n)}{(n-1)^{1-\rho}L(n-1)}
\longrightarrow 1,
\end{equation} 
and by the definition already established,
\begin{equation} 
(n-1)^{1-\rho}L(n-1)\,\mathbb{P}_{X_{1}}(\tau_{y+S_{1}}>n-1)
\longrightarrow
V_{\alpha}(X_{1}, y+S_{1}).
\end{equation} 
Dominated convergence is justified by the spectral gap assumptions, yielding
\begin{equation} 
V_{\alpha}(x,y)
=
\mathbb{E}_{x}\!\left[
V_{\alpha}(X_{1}, y+S_{1})
\,;\,
\tau_{y}>1
\right],
\end{equation} 
so $V_{\alpha}$ is $Q_{+}$-harmonic and
\begin{equation} 
V_{\alpha}(X_{n}, y+S_{n})\,\mathbf{1}_{\{\tau_{y}>n\}}
\end{equation} 
is a martingale.

For the asymptotic growth as $y\to\infty$, the strong approximation shows that the event $\{\tau_{y}>n\}$ is controlled by the event
\begin{equation} 
\min_{k\le n} M_{k} > -y.
\end{equation} 
The stable scaling property implies that the probability of remaining above a small negative barrier $-\varepsilon$ behaves as $\varepsilon^{\alpha(1-\rho)}$. Setting $\varepsilon = y\, n^{-1/\alpha}$ gives
\begin{equation} 
\mathbb{P}(\tau_{y}>n)
\approx
y^{\alpha(1-\rho)} n^{-(1-\rho)} L(n)^{-1}.
\end{equation} 
Since
\begin{equation} 
\mathbb{P}_{x}(\tau_{y}>n)
\sim
V_{\alpha}(x,y) \, n^{-(1-\rho)} L(n)^{-1},
\end{equation} 
comparison yields
\begin{equation} 
V_{\alpha}(x,y)
\sim
h(x)\, y^{\alpha(1-\rho)},
\end{equation} 
for some strictly positive function $h(x)$ determined by the spectral gap and the remainder structure. This establishes the claimed asymptotic growth.
\end{proof}

\begin{theorem}[Global Limit Theorem]
Assume Assumptions~2.2--2.6. Then for every fixed $x\in\mathbb{X}$ and $y>0$:

1. The survival probability satisfies
\begin{equation} 
\lim_{n\to\infty}
n^{1-\rho} L(n)\,\mathbb{P}_{x}(\tau_{y}>n)
=
V_{\alpha}(x,y).
\end{equation} 

2. Conditioned on survival, the rescaled Markov walk converges in distribution to the $\alpha$-stable meander:
\begin{equation} 
\mathcal{L}\!\left(
\frac{S_{n}}{n^{1/\alpha}L(n)}
\;\Big|\;
\tau_{y}>n
\right)
\xrightarrow{d}
\mathcal{M}_{\alpha}.
\end{equation} 
\end{theorem}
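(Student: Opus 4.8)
The first assertion of the theorem is precisely Theorem~5.1, so the plan is to prove the conditional convergence stated in the second assertion. Since the limit $\mathcal M_\alpha$ is the time-$1$ marginal of the $\alpha$-stable meander, only the weak convergence of the scalar $S_n/(n^{1/\alpha}L(n))$ is required, which spares us the functional tightness of the whole conditioned trajectory. The overall strategy is to transfer the problem to the limiting strictly $\alpha$-stable Lévy process $Z_\alpha$ of Proposition~4.1, invoke the classical conditioned invariance principle for stable Lévy processes to identify $\mathcal M_\alpha$ as the limit, and then transfer back to the Markov walk; the new work lies entirely in making this transfer legitimate despite the boundary perturbation produced by the Gordin remainder.

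First I would pass from $S_n$ to its martingale part. By $S_k=M_k+r(X_0)-r(X_k)$ and Lemma~3.1, the quantity $K_n=\max_{0\le j\le n}|r(X_j)|$ satisfies $K_n/n^{1/\alpha}\to0$ in $\mathbb P_x$-probability, so after restricting to the high-probability event $\{K_n\le\eta\,n^{1/\alpha}\}$ and letting $\eta\downarrow0$ at the end, $S_n/(n^{1/\alpha}L(n))$ and $M_n/(n^{1/\alpha}L(n))$ have the same conditional limit. Next, the strong approximation of Proposition~4.1, in its uniform form $\sup_{t\le1}|M_{\lfloor nt\rfloor}-Z_\alpha(nt)|=o(n^{1/\alpha})$, lets me replace $M_n$ by $Z_\alpha(n)$ and the polygonal path of $M$ by the path of $Z_\alpha$ on $[0,1]$. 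By the exact self-similarity of $Z_\alpha$, conditioning it (started at $y$) on $\{\tau^{\mathrm{stable}}>n\}$ is the same as conditioning $Z_\alpha$ started at the vanishing level $y\,n^{-1/\alpha}$ to remain positive on $[0,1]$, and the classical fluctuation theory of stable Lévy processes conditioned to stay positive (in the spirit of Durrett, Doney and Caravenna--Chaumont) then gives $Z_\alpha(n)/n^{1/\alpha}\xrightarrow{d}\mathcal M_\alpha$, with a limit that does not depend on the vanishing starting level; Proposition~4.3 supplies the matching $t^{-(1-\rho)}$ tail normalization and Theorem~5.1 guarantees $\mathbb P_x(\tau_y>n)\asymp n^{-(1-\rho)}L(n)^{-1}$, so that the conditional probabilities are well posed in the limit.

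The main obstacle is that Lemma~4.2 only sandwiches $\{\tau_y>n\}$ between $\{\min_{k\le n}(y+M_k)>2K_n\}$ and $\{\min_{k\le n}(y+M_k)>-2K_n\}$, and since the effective starting level $y$ is of order $1$ while $2K_n$ is only $o(n^{1/\alpha})$, the two bounding events can have probabilities of strictly larger order than $\mathbb P_x(\tau_y>n)$ itself; the crude sandwich therefore does not pin the conditional law down on its own. The remedy, following Denisov--Wachtel and Grama--Lauvergnat--Le~Page, is a two-time decomposition: fix an intermediate time $m=m(n)\to\infty$ with $m/n\to0$, apply the Markov property at time $m$, and write
\begin{equation}
\mathbb P_x\!\big(S_n\le z\,n^{1/\alpha}L(n),\ \tau_y>n\big)
=\mathbb E_x\!\Big[\mathbf{1}_{\{\tau_y>m\}}\,\mathbb P_{X_m}\!\big(S_{n-m}\le z\,n^{1/\alpha}L(n)-S_m,\ \tau_{y+S_m}>n-m\big)\Big].
\end{equation}
On $\{\tau_y>m\}$ the walk has escaped the barrier, so $y+S_m$ is typically of order $m^{1/\alpha}L(m)$, which dominates $K_n$ once $m$ grows fast enough; substituting the asymptotics of Theorem~5.1 for the inner probability together with the growth $V_\alpha(X_m,y+S_m)\sim h(X_m)(y+S_m)^{\alpha(1-\rho)}$, normalizing by $\mathbb P_x(\tau_y>n)$, and letting first $n\to\infty$ and then $m\to\infty$ should recover exactly the time-$1$ marginal of the $\alpha$-stable meander, the explicit limit being read off from the corresponding decomposition already available for the Lévy process. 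The bulk of the remaining work is twofold: (a) upgrading the convergence in Theorem~5.1 to hold uniformly over the random starting pair $(X_m,y+S_m)$, which rests on the spectral-gap bounds of Assumptions~2.3--2.5 and the Fourier-operator perturbation estimates of Assumption~2.4; and (b) proving the escape estimate that, conditionally on $\{\tau_y>m\}$, the paths with $y+S_m=o(m^{1/\alpha})$ carry vanishing mass --- a small-strip bound for the minimum of a martingale in the stable domain, of the same type as the strip estimate used in Lemma~4.2 but now combined with the fact that the $\alpha$-stable meander has no atom at the origin.
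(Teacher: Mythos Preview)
Your proposal is correct and, for Part~2, takes a more careful route than the paper. The paper argues directly by writing the conditional probability as a ratio, invoking the unconditioned functional convergence of Proposition~4.1 to claim that the joint probability $\mathbb{P}_x\bigl(S_n/(n^{1/\alpha}L(n))\in A,\ \tau_y>n\bigr)$ converges to $\mathbb{P}\bigl(Z_\alpha(1)\in A,\ \tau^{\mathrm{stable}}>1\bigr)$, and then dividing by the survival asymptotic from Part~1; the matching of prefactors is justified only by the sentence ``the Markov property at time~$1$ and the strong approximation decouple the initial state from the long-time stable behavior.'' You correctly flag the obstacle this glosses over: the conditioning event rescales to the $\mathbb{P}$-null boundary set $\{\min_{t\le1}Z_\alpha(t)\ge0\}$, so Portmanteau does not apply directly, and the sandwich of Lemma~4.2 alone does not pin down the conditional law because the shift $K_n=o(n^{1/\alpha})$ is large compared with the starting level $y=O(1)$. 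Your remedy---the two-time decomposition at an intermediate scale $m=m(n)\to\infty$, $m/n\to0$, followed by a uniform version of Theorem~5.1 at the random starting pair $(X_m,y+S_m)$ and an escape estimate ruling out $y+S_m=o(m^{1/\alpha})$---is exactly the machinery Grama--Lauvergnat--Le~Page deploy in the Gaussian case and is what a rigorous proof here actually requires. The paper's shortcut buys brevity; your approach buys a genuine argument and makes transparent why the limit is independent of $(x,y)$.
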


\begin{proof}
By Theorem~5.1, the limit
\begin{equation} 
V_{\alpha}(x,y)
=
\lim_{n\to\infty}
n^{1-\rho}L(n)\,\mathbb{P}_{x}(\tau_{y}>n)
\end{equation} 
exists and is strictly positive for every $x\in\mathbb{X}$ and $y>0$. Rearranging yields the asymptotic formula
\begin{equation} 
\mathbb{P}_{x}(\tau_{y}>n)
\sim
\frac{V_{\alpha}(x,y)}{n^{1-\rho}L(n)},
\end{equation} 
which proves the first part of the theorem.

To obtain the conditional limit, consider a Borel set $A\subset\mathbb{R}$ which is a continuity set for the stable meander distribution. The conditional probability is
\begin{equation} 
\mathbb{P}_{x}\!\left(
\frac{S_{n}}{n^{1/\alpha}L(n)}\in A
\;\Big|\;
\tau_{y}>n
\right)
=
\frac{
\mathbb{P}_{x}\!\left(
\frac{S_{n}}{n^{1/\alpha}L(n)}\in A,\,
\tau_{y}>n
\right)
}{
\mathbb{P}_{x}(\tau_{y}>n)
}.
\end{equation} 

Define the rescaled path
\begin{equation} 
w_{n}(t)
=
\frac{S_{\lfloor nt\rfloor}}{n^{1/\alpha}L(n)},\qquad t\in[0,1],
\end{equation} 
and the event
\begin{equation} 
E_{n}(A)
=
\{w_{n}(1)\in A,\; \min_{0\le t\le 1} w_{n}(t) > 0\}.
\end{equation} 
Proposition~4.1 gives the functional convergence
\begin{equation} 
(w_{n}(t))_{t\ge 0}
\xrightarrow{d}
(Z_{\alpha}(t))_{t\ge 0},
\end{equation} 
where $(Z_{\alpha}(t))$ is a strictly $\alpha$-stable Lévy process. Lemma~4.2 shows that the boundary perturbation coming from the remainder term is negligible in the scaling limit. Therefore,
\begin{equation} 
\mathbb{P}_{x}\!\left(
\frac{S_{n}}{n^{1/\alpha}L(n)}\in A,\,
\tau_{y}>n
\right)
\longrightarrow
\mathbb{P}\!\left(
Z_{\alpha}(1)\in A,\;
\tau^{stable}>1
\right),
\end{equation} 
where $\tau^{stable}=\inf\{t>0 : Z_{\alpha}(t)\le 0\}$.

The stable meander distribution is defined by
\begin{equation} 
\mathcal{M}_{\alpha}(A)
=
\frac{
\mathbb{P}\!\left(
Z_{\alpha}(1)\in A,\;
\tau^{stable}>1
\right)
}{
\mathbb{P}(\tau^{stable}>1)
}.
\end{equation} 
By the first part of the theorem,
\begin{equation} 
\mathbb{P}_{x}(\tau_{y}>n)
\sim
\frac{V_{\alpha}(x,y)}{n^{1-\rho}L(n)}.
\end{equation} 
The numerator has the same asymptotic prefactor $V_{\alpha}(x,y)\,n^{-(1-\rho)}L(n)^{-1}$, because the Markov property at time $1$ and the strong approximation decouple the initial state from the long-time stable behavior. Hence the ratio of numerator to denominator satisfies
\begin{equation} 
\frac{
\mathbb{P}_{x}\!\left(
\frac{S_{n}}{n^{1/\alpha}L(n)}\in A,\,
\tau_{y}>n
\right)
}{
\mathbb{P}_{x}(\tau_{y}>n)
}
\longrightarrow
\frac{
\mathbb{P}\!\left(
Z_{\alpha}(1)\in A,\;
\tau^{stable}>1
\right)
}{
\mathbb{P}(\tau^{stable}>1)
}
=
\mathcal{M}_{\alpha}(A).
\end{equation} 

Therefore,
\begin{equation} 
\mathcal{L}\!\left(
\frac{S_{n}}{n^{1/\alpha}L(n)}
\;\Big|\;
\tau_{y}>n
\right)
\xrightarrow{d}
\mathcal{M}_{\alpha},
\end{equation} 
which proves the second part and completes the proof.
\end{proof}

\section{Conclusion}
In this work we investigated the asymptotic behaviour of Markov additive processes 
in the strictly $\alpha$-stable domain and established the corresponding conditioned 
limit theorems. Relying on the spectral gap property of the underlying transition 
operator and on a refined martingale approximation, we constructed a non-negative 
$Q^{+}$-harmonic function $V_{\alpha}$ which governs the survival probabilities of 
the walk. We proved that for every initial state $(x,y)$ in the domain of positivity,
\begin{equation}
P_{x}(\tau_{y}>n)\sim \frac{V_{\alpha}(x,y)}{n^{\,1-\rho}L(n)},
\end{equation}
and we identified the precise growth of $V_{\alpha}$ as $y\to\infty$. Furthermore, 
we showed that, conditionally on $\{\tau_{y}>n\}$, the rescaled walk converges to the 
$\alpha$-stable meander, thereby extending the classical Gaussian results to the full 
heavy-tailed setting.

The approach developed here parallels the strategy used in the finite-variance case of \cite{GramaLauvergnatLePage2018}, 
while incorporating the fluctuation theory of $\alpha$-stable processes and the 
operator-theoretic framework needed for Markov dependence. The results obtained 
provide a unified description of exit-time asymptotics and conditioned limits for a 
broad class of Markov walks, and they illustrate the robustness of harmonic-function 
methods in settings where long-range jumps and heavy tails play a central role.

\noindent\textbf{Keywords.}
Markov additive processes; Markov random walks; exit-time asymptotics; heavy-tailed increments; 
$\alpha$-stable laws; stable meander; spectral gap methods; perturbed transfer operators; 
harmonic functions; conditioning to stay positive; strong approximation; survival probability; 
random walks in cones

\noindent\textbf{2020 Mathematics Subject Classification.}
\textbf{Primary:} 60G50, 60J10, 60F17, 60G52.\\
\textbf{Secondary:} 60J25, 60G40, 60B10, 60F05, 60G70, 37A50.

\end{document}